\begin{document}

\newcommand{\NP}{$\mathcal{N}\mathcal{P}$}
\newcommand{\newsymb}{{\mathcal P}}
\newcommand{\Pn}{{\mathcal P}^n}
\newcommand{\Kn}{{\mathcal K}^n}
\newcommand{\R}{{\mathbb R}}
\newcommand{\N}{{\mathbb N}}
\newcommand{\Q}{{\mathbb Q}}
\newcommand{\Z}{{\mathbb Z}}
\newcommand{\C}{{\mathbb C}}

\newcommand{\enorm}[1]{\Vert #1\Vert}
\newcommand{\inter}{\mathrm{int}}
\newcommand{\conv}{\mathrm{conv}}
\newcommand{\aff}{\mathrm{aff}}
\newcommand{\lin}{\mathrm{lin}}
\newcommand{\cone}{\mathrm{cone}}
\newcommand{\bd}{\mathrm{bd}}

\newcommand{\dist}{\mathrm{dist}}
\newcommand{\trans}{\intercal}
\newcommand{\diam}{\mathrm{diam}}
\newcommand{\vol}{\mathrm{vol}}
\newcommand{\F}{\mathrm{F}}
\newcommand{\W}{\mathrm{W}}
\newcommand{\V}{\mathrm{V}}

\newcommand{\LE}{\mathrm{G}}
\newcommand{\lE}{\mathrm{g}}
\newcommand{\sa}{\mathrm{a}}

\newcommand{\pp}{\mathfrak{p}}
\newcommand{\pf}{\mathfrak{f}}
\newcommand{\pg}{\mathfrak{g}}
\newcommand{\PP}{\mathfrak{P}}
\newcommand{\pl}{\mathfrak{l}}
\newcommand{\pv}{\mathfrak{v}}
\newcommand{\cl}{\mathrm{cl}}
\newcommand{\bx}{\overline{x}}

\def\ip(#1,#2){#1\cdot#2}

\newtheorem{theorem}{Theorem}[section]
\newtheorem{theorem*}{Theorem}
\newtheorem{corollary}[theorem]{Corollary}
\newtheorem{lemma}[theorem]{Lemma}
\newtheorem{remark}[theorem]{Remark}
\newtheorem{definition}[theorem]{Definition}  
\newtheorem{conjecture}{Conjecture}[section] 
\newtheorem{proposition}[theorem]{Proposition}  
\newtheorem{claim}[theorem]{Claim}
\newtheorem{problem}[theorem]{Problem}
\numberwithin{equation}{section}

\title[A Blichfeldt-type inequality]{A Blichfeldt-type inequality for the \\ surface area}   
\author{Martin Henk}
\address{Martin Henk, Universit\"at Magdeburg, Institut f\"ur Algebra und Geometrie,
  Universit\"ats\-platz 2, D-39106 Magdeburg, Germany}
\email{henk@math.uni-magdeburg.de}
\author{J\"org M.~Wills}
\address{J\"org M.~Wills, Universit\"at Siegen,  Mathematisches Institut,
 ENC, D-57068 Siegen, Germany}
\email{wills@mathematik.uni-siegen.de}

\keywords{Lattice polytopes, volume, surface area}
\subjclass[2000]{52C07, 11H06}

\begin{abstract} 
In 1921 Blichfeldt gave an upper bound on the number of integral  points  contained in a convex body  in terms of the volume of the body. More precisely, he showed that $\#(K\cap\Z^n)\leq n!\,\vol(K)+n$, whenever $K\subset\R^n$ is a convex body containing $n+1$ affinely  independent integral points. Here we prove an analogous inequality with respect to the surface area $\F(K)$, namely $ \#(K\cap\Z^n) < \vol(K) + ((\sqrt{n}+1)/2 )\,(n-1)!\,\F(K)$. The proof is based on a  slight improvement of Blichfeldt's bound in the case when $K$ is a non-lattice translate of a lattice  polytope, i.e., $K=t+P$, where $t\in\R^n\setminus\Z^n$ and $P$ is an $n$-dimensional polytope with integral vertices. Then we have $\#((t+P)\cap\Z^n)\leq n!\,\vol(P)$.    

Moreover, in the $3$-dimensional case we prove a stronger inequality, namely $\#(K\cap\Z^n) < \vol(K) + 2\,\F(K)$.
\end{abstract}

\maketitle

\section{Introduction}
Let $\Kn$ be the set of all convex bodies in the $n$-dimensional Euclidean space $\R^n$.  
For a subset $S\subset \R^n$ and the integral lattice $\Z^n$ let
$\LE(S)$ be the lattice point enumerator of $S$, i.e., $\LE(S)=\#(S\cap
\Z^n)$. By $\vol(S)$ we denote, as usual, the volume, i.e.,
the $n$-dimensional Lebesgue measure, of $S$. 

The problem
to bound $\LE(K)$, $K\in\Kn$,  in terms of continuous functionals, as
e.g.~the intrinsic volumes, has a long history in convexity
(cf.~e.g.~\cite{BeckRobins2007,  Gritzmann1993c, Lagarias1995}), and the first general upper bound with respect to the
volume is due to Blichfeldt \cite{Blichfeldt1920/21}
\begin{equation} 
   \LE(K) \leq n!\,\vol(K)+n,
\label{eq:blichfeldt}
\end{equation} 
provided $\dim(K\cap\Z^n)=n$, i.e., $K$ contains $n+1$ affinely
independent lattice points of $\Z^n$. This bound is best possible for
any number of lattice points, as, for instance, the simplex 
$S_k=\conv\{0,k\,e_1,\dots,e_n\}$,  
$k\in\N$, shows. Here $e_i$ denotes the $i$-th canonical unit vector and so we
have $\LE(S_k)=k+n$ and $\vol(S_k)=k/n!$. 
Our main result is an inequality analogous to \eqref{eq:blichfeldt},
but now with respect to the surface area $\F(K)$ of the body.
\begin{theorem} Let $K\in\Kn$ with $\dim(K\cap\Z^n)=n$.
 Then 
\begin{equation*}
  \LE(K)< \vol(K)+\frac{\sqrt{n}+1}{2}\,(n-1)!\,\F(K).
\end{equation*} 
\label{thm:main}
\end{theorem}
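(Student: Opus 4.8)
The plan is to bound $\LE(K)$ by passing from the body $K$ to a discretized lattice-polytope approximation and then to apply the refined Blichfeldt inequality for non-lattice translates announced in the abstract, namely $\#((t+P)\cap\Z^n)\leq n!\,\vol(P)$. The key observation is that the lattice points of $K$ can be partitioned (or related) to cells of the integer lattice, and the difference between $n!\,\vol(K)$ and $\vol(K)$ — which is what separates Blichfeldt's bound from the desired surface-area bound — must be absorbed into a term governed by $\F(K)$. So the strategy is to write $\LE(K)=\LE(K)$ and account for the excess over $\vol(K)$ geometrically.

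Concretely, I would first reduce to the case where $K=P$ is a lattice polytope, or more generally handle the body $K$ by slicing it with the integer hyperplanes $\{x_n=k\}$, $k\in\Z$. For each such slice $K_k=K\cap\{x_n=k\}$, which is an $(n-1)$-dimensional convex body, one controls $\#(K_k\cap\Z^n)$ using the $(n-1)$-dimensional Blichfeldt bound, obtaining roughly $(n-1)!\,\vol_{n-1}(K_k)+(n-1)$ points. Summing over $k$, the volume terms assemble (via a Riemann-sum/Fubini argument) into something comparable to $\vol(K)$, while the additive constants $(n-1)$ per nonempty slice accumulate to a term proportional to the number of integer values of $x_n$ in the projection of $K$, which is bounded by the width of $K$ in the $e_n$ direction plus one. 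The factor $(\sqrt{n}+1)/2$ strongly suggests that one optimizes over the direction of slicing: averaging the width bound over coordinate directions, or choosing the direction minimizing the relevant width, and relating the sum of lattice widths to the surface area $\F(K)$ via a Cauchy-type projection formula.

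The mechanism linking the additive slicing error to the surface area is the crux. I would use the fact that the surface area dominates the cross-sectional $(n-1)$-volumes of the slices: by the monotonicity of intrinsic volumes or a direct projection argument, each $\vol_{n-1}(K_k)$ is at most half the surface area of the portion of $K$ between consecutive levels, so the telescoping sum $\sum_k \vol_{n-1}(K_k)$ is controlled by $\F(K)$ up to a constant. Combined with the improved translate inequality, which removes the additive $+n$ by exploiting non-lattice translation, the per-slice volume contributions should collapse to $\vol(K)$ plus a multiple of $(n-1)!\,\F(K)$. The factor $\sqrt{n}$ would enter through the relation between the Euclidean surface area and the lattice width in a generic direction — specifically through bounding the diameter of a unit-normal simplex or the length of the all-ones vector $\sqrt{n}$ appearing when comparing $\ell_\infty$ and $\ell_2$ norms of lattice directions.

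I expect the main obstacle to be making the surface-area bookkeeping sharp enough to produce the constant $(\sqrt{n}+1)/2$ rather than a cruder constant. The naive slicing argument yields a term proportional to $\vol_{n-1}$ of the shadow (projection) of $K$, and the projection in a single direction only captures part of $\F(K)$; converting this into the full surface area with the correct normalization — and simultaneously handling the boundary slices where $K_k$ is lower-dimensional so that the improved non-lattice-translate bound applies — is the delicate part. The strict inequality in the conclusion further signals that some slice must contribute strictly less than its worst-case estimate, so one must argue that equality in Blichfeldt's bound on every slice is geometrically impossible for an honest $n$-dimensional body, which is where I anticipate the finicky case analysis.
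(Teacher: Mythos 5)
Your proposal contains a fatal quantitative error that the whole argument rests on. If you slice $K$ by the hyperplanes $\{x_n=k\}$ and apply the $(n-1)$-dimensional Blichfeldt bound to each slice $K_k$, you get roughly $\sum_k \bigl((n-1)!\,\vol_{n-1}(K_k)+(n-1)\bigr)$. By Fubini the sum $\sum_k \vol_{n-1}(K_k)$ is comparable to $\vol(K)$, so this route produces $(n-1)!\,\vol(K)$, not $\vol(K)$: the factor $(n-1)!$ multiplies the volume, which is exactly what the theorem must avoid. Your attempted repair --- the claim that ``the telescoping sum $\sum_k \vol_{n-1}(K_k)$ is controlled by $\F(K)$ up to a constant'' --- is false: for the ball of radius $R$ in $\R^3$ the sum of integer-height slice areas is of order $R^3$ while $\F$ is of order $R^2$, and under dilation $K\mapsto\lambda K$ no bound of the form $\sum_k\vol_{n-1}(K_k)\leq c\,\F(K)$ can hold. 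The same scaling obstruction kills your opening premise that the gap between $n!\,\vol(K)$ and $\vol(K)$ ``must be absorbed into a term governed by $\F(K)$'': that gap grows like $\lambda^n$, the surface area only like $\lambda^{n-1}$. A related confusion: slices of $K$ at integer heights are not non-lattice translates of lattice polytopes, so the refined translate inequality (Lemma \ref{lem:main}) would not even apply to them; the lattice points in such a slice form an ordinary $(n-1)$-dimensional lattice polytope, for which only the standard Blichfeldt bound with its additive constant is available.

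The idea you are missing is that Blichfeldt-type counting must be confined to a thin neighborhood of the boundary, while the bulk of the lattice points is handled by a packing argument that costs nothing beyond $\vol$. The paper reduces to $P=\conv\{K\cap\Z^n\}$ and splits $P\cap\Z^n$ into $L_1=\{z : z+C_n\subset P\}$ ($C_n$ the unit cube centered at $z$) and $L_2$ the rest; disjointness of the cubes gives $\#L_1\leq\vol(P)$ with coefficient $1$ and no factorials at all. Each $z\in L_2$ violates a facet inequality $a_i\cdot x\leq b_i$ by at most $\gamma_i=\lceil\tfrac12|a_i|_1\rceil-1$, so $L_2$ lies in prisms $Q_i$ of lattice height $\gamma_i$ erected inward over the facets $F_i$. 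Each prism is cut into lattice layers parallel to $F_i$: the layer $j=0$ is $F_i$ itself, a lattice polytope in a hyperplane lattice of determinant $\enorm{a_i}$, handled by Blichfeldt in dimension $n-1$; every layer $j\geq 1$ is a \emph{non-lattice} translate of $F_i$, which is precisely where Lemma \ref{lem:main} enters to kill the additive constant $(n-1)$ in all but one layer. Since there are at most $\lceil\tfrac12|a_i|_1\rceil$ layers and $|a_i|_1\leq\sqrt{n}\,\enorm{a_i}$, one gets $\LE(Q_i)<\tfrac{\sqrt{n}+1}{2}(n-1)!\,\vol_{n-1}(F_i)+(n-1)$, and summing over facets (with a vertex-facet incidence count to remove the leftover constants) yields $\#L_2<\tfrac{\sqrt{n}+1}{2}(n-1)!\,\F(P)$. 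So the factor $\sqrt{n}$ comes from comparing the $\ell_1$ and $\ell_2$ norms of the primitive facet normals, not from optimizing a slicing direction; and the surface area appears because the counting is performed over the facets, never over full cross-sections of the body.
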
 
In contrast to Blichfeldt's inequality the volume is now weighted by the factor $1$ instead of $n!$, which is apparently  best possible. 
 We conjecture that the factor
 $\frac{\sqrt{n}+1}{2}$ can be omitted in this inequality. In dimension $2$ this follows easily from Pick's identity \cite[pp.~38]{BeckRobins2007}, and the $3$-dimensional case is settled in the next theorem. 
\begin{theorem} Let $K\in\mathcal{K}^3$ with $\dim(K\cap\Z^3)=3$. Then
\begin{equation*}
   \LE(K)< \vol(K) + 2\,\F(K).
\end{equation*}  
\label{thm:three_case}
\end{theorem}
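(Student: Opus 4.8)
The plan is to prove the sharper three-dimensional bound $\LE(K)<\vol(K)+2\,\F(K)$ by a reduction to lattice polytopes together with the improved Blichfeldt estimate for non-lattice translates stated in the abstract. First I would reduce to the case where $K=P$ is a $3$-dimensional lattice polytope: since $\LE$, $\vol$ and $\F$ behave monotonically and continuously, one expects that the extremal configurations occur for polytopes whose vertices are lattice points, or whose ``relevant'' lattice points sit on the boundary in a controlled way. The main tool should be the refined inequality $\LE(t+P)\le n!\,\vol(P)$ for $t\in\R^n\setminus\Z^n$, applied in dimension $n=3$, giving $\LE(t+P)\le 6\,\vol(P)$ for a non-lattice translate.

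Next I would bring in the surface area through a dissection or slicing argument. The idea is to account separately for the interior lattice points and the boundary lattice points of $P$. For the interior points one can hope to use a volume bound that is much better than the crude $n!\,\vol$, because interior points of a lattice polytope are comparatively sparse; the factor $1$ in front of $\vol(K)$ in the theorem suggests that asymptotically each interior lattice point should be charged essentially one unit of volume, via a packing argument using translates of a fundamental cell or a half-open unit cube placed at each interior lattice point. For the boundary lattice points, which are the ones responsible for the surface term, I would project the relevant facets of $P$ onto coordinate planes (or onto the facet's own affine hull) and count lattice points facet-by-facet using the $2$-dimensional Pick-type bound; summing the planar contributions and comparing the planar lattice-point count of a facet $F$ with its area should produce a bound of the form $\sum_F c\cdot\mathrm{area}(F)=c\,\F(P)$, and the point is to show that in dimension three the constant $c$ can be taken to be $2$.

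The key inequality to establish is therefore a clean decomposition
\begin{equation*}
   \LE(P)\;\le\;\vol(P)+\sum_{F\text{ facet}}\bigl(\text{lattice points counted on }F\bigr),
\end{equation*}
where interior points are charged to volume by a unit-cube packing and boundary points are charged to their facets, with the overcounting along edges and vertices handled by an inclusion--exclusion that does not spoil the constant. In each coordinate-plane projection one uses that a planar lattice polygon $Q$ satisfies a bound like $\LE(Q)\le \mathrm{area}(Q)+\tfrac12\,(\text{boundary length count})+1$ coming from Pick's theorem, and the sharpness of the constant $2$ should come from balancing the $\tfrac12$ from Pick against the at most two coordinate planes onto which a given facet of a $3$-polytope can project with full area.

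The hard part will be the bookkeeping at the interface between the volume charge and the surface charge, that is, making the packing of unit cubes at interior lattice points and the facet-wise Pick counting fit together so that \emph{every} lattice point of $P$ is counted at least once while the total surface charge stays below $2\,\F(P)$. In particular, lattice points lying on edges or low-dimensional faces of $P$ are shared between several facets and are simultaneously ``almost interior'' from the volume side, so controlling their contribution without double counting — and verifying that the strict inequality survives this accounting even for thin or degenerate polytopes — is where the real work lies. I expect the constant $2$ to be exactly what makes the edge and vertex corrections absorbable, so the estimate should be tight and the argument delicate precisely at those lower-dimensional faces.
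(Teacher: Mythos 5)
Your proposal is essentially a rerun of the paper's proof of Theorem \ref{thm:main} specialized to $n=3$, and that machinery provably cannot reach the constant $2$. Note first that Theorem \ref{thm:main} itself gives, for $n=3$, the factor $\frac{\sqrt{3}+1}{2}\,2!=\sqrt{3}+1\approx 2.73$ in front of $\F(K)$. The obstruction is quantitative: in the cube-packing/facet-charging scheme, the lattice points near a facet $F$ with primitive outer normal $a\in\Z^3$ occupy up to $\lceil |a|/2\rceil$ parallel lattice planes (here $|\cdot|$ is the $l_1$-norm), and within each such plane the best area-only bound available --- two-dimensional Blichfeldt for the layer $F$ itself, Lemma \ref{lem:main} for the non-lattice translated layers --- is $2\,\vol_{2}(F)/\enorm{a}$ points, since by \eqref{eq:determinant} the induced planar lattice has determinant $\enorm{a}$. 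Hence the per-facet charge is at least $\lceil |a|/2\rceil\cdot 2\,\vol_{2}(F)/\enorm{a}$; for a facet with normal $a=(1,1,1)$ --- exactly the kind occurring in the simplex $S_1$ and its dilates --- this equals $(4/\sqrt{3})\,\vol_{2}(F)\approx 2.31\,\vol_{2}(F)$, already above $2\,\vol_{2}(F)$. Pick's theorem inside the layers does not rescue this, because Pick trades area against boundary lattice points, which still must be counted somewhere; and projecting a facet onto a coordinate plane preserves its lattice points while shrinking its area (by the factor $1/\sqrt{3}$ for normal $(1,1,1)$), so the projection bookkeeping you sketch makes the per-facet constant worse, not better. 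No amount of care at edges and vertices fixes this, since those corrections are additive while the deficit $4/\sqrt{3}>2$ is multiplicative in $\vol_2(F)$.

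The paper's actual proof uses entirely different tools, and this is the idea you are missing: Overhagen's theorem that Wills' conjectured inequality $\LE(K)\leq\sum_{i}\V_i(K)$ holds in dimension $3$, combined with the Steiner polynomial, gives $\LE(Q)<\vol(Q+\pi^{-1/2}B_3)+0.25$ for every $Q\in\mathcal{K}^3$; applying this to the inner parallel body $Q=P\ominus\pi^{-1/2}B_3$ of $P=\conv\{K\cap\Z^3\}$ yields $\LE(P\ominus\pi^{-1/2}B_3)<\vol(P)+0.25$. The points near the boundary are then handled not facet-by-facet but by the global McMullen--Wills estimate $\LE(P)-\LE(P\ominus 3^{-1/2}B_3)\leq\F(P)+2$. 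Adding the two bounds gives $\LE(P)<\vol(P)+\F(P)+2.25$, and the additive constant is absorbed into the surface term because every $3$-dimensional lattice polytope satisfies $\F(P)\geq\F(S_1)=(3+\sqrt{3})/2>2.25$. So the surface area enters through parallel-body results rather than through layered counting, and that change of method is what makes the constant $2$ (in fact, morally $1$ plus an absorbed constant) attainable.
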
 
We remark that an inequality of the form $\LE(K)<\vol(K)+(n-1)!\,\F(K)$ 
would be tight in the sense that $(n-1)!$ in front of the surface area can not be replaced by $c\,(n-1)!$ for a constant $c<1$.   
  To see this we note that for the simplex $S_1$ with $n+1$ lattice points we have $\F(S_1)=(n+\sqrt{n})/(n-1)!$.

The inequality in Theorem \ref{thm:main} may also be regarded as a
counterpart to a well-known lower bound  on $\LE(K)$ due to Bokowski,
Hadwiger and Wills \cite{Bokowski1972}. They proved that 
\begin{equation}
  \vol(K)- \frac{1}{2}\F(K) < \LE(K),
\label{eq:bhw_lattice}
\end{equation} 
and this inequality is best possible. 

The proof of Theorem \ref{thm:main} is based on a lemma on lattice
points in a translate of a lattice polytope.  To this end we denote by $\Pn\subset \Kn$ the set of all lattice polytopes,
i.e., polytopes having integral vertices.  
\begin{lemma} Let $P\in\Pn$ with $\dim(P\cap\Z^n)=n$, and let
  $t\in\R^n\setminus\Z^n$. Then 
\begin{equation*}
   \LE(t+P)\leq n!\vol(P),
\end{equation*} 
and the inequality is best possible for any number of lattice points.
\label{lem:main}
\end{lemma}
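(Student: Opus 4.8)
The plan is to reduce the estimate to a single lattice simplex and then to exploit that a non-lattice translation destroys exactly the boundary contribution responsible for the additive term $n$ in Blichfeldt's bound \eqref{eq:blichfeldt}. Throughout I write $L=\Z^n-t$, so that $\LE(t+P)=\#\big((t+P)\cap\Z^n\big)=\#(P\cap L)$, and I note $L\neq\Z^n$ since $t\notin\Z^n$.

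First I would fix a triangulation of $P$ into lattice simplices $T_1,\dots,T_m$, i.e.\ simplices with vertices in $\Z^n$, pairwise disjoint interiors, and $\bigcup_i T_i=P$; such a triangulation always exists (for instance a pulling triangulation on the vertices of $P$). Since every point of $P$ lies in some $T_i$, one has $\#(P\cap L)\le\sum_{i=1}^m\#(T_i\cap L)$, and because $\sum_i\vol(T_i)=\vol(P)$ it suffices to prove the pointwise estimate $\#(T\cap L)\le n!\,\vol(T)$ for every lattice simplex $T$ and every $t\in\R^n\setminus\Z^n$; summing this over $i$ then yields $\LE(t+P)\le n!\,\vol(P)$.

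To prove this key estimate I would first translate one vertex of $T$ to the origin by an integral vector (which replaces $t$ by another non-lattice vector and preserves $\vol(T)$), so that $T=\conv\{0,v_1,\dots,v_n\}$; put $g=n!\,\vol(T)=|\det[v_1,\dots,v_n]|\in\N$, the index in $\Z^n$ of the sublattice generated by $v_1,\dots,v_n$. Let $A$ be the linear map with $Ae_i=v_i$. Applying $A^{-1}$ carries $T$ onto the standard simplex $\Sigma=\conv\{0,e_1,\dots,e_n\}$ and $L$ onto a translate $M-s$ of the lattice $M=A^{-1}\Z^n$, which contains $\Z^n$ with index $g$; moreover $s=A^{-1}t\notin M$ precisely because $t\notin\Z^n$. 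Decomposing $M$ into its $g$ cosets modulo $\Z^n$ expresses $M-s$ as a disjoint union of $g$ sets $\Z^n-u_1,\dots,\Z^n-u_g$, and the condition $s\notin M$ forces $u_j\notin\Z^n$ for every $j$. Hence the estimate reduces to the elementary fact that $\Sigma$ contains at most one point of any non-lattice translate $\Z^n-u$: a point $z-u\in\Sigma$ satisfies $z_i\ge\lceil u_i\rceil$ for all $i$ together with $\sum_i z_i\le 1+\sum_i u_i<1+\sum_i\lceil u_i\rceil$, and these two constraints force $z=\lceil u\rceil$, a single candidate. Summing over the $g$ cosets gives $\#(\Sigma\cap(M-s))\le g=n!\,\vol(T)$, as required.

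The hard part is precisely that $P$ need not admit a unimodular triangulation once $n\ge 3$, so one cannot assume the pieces $T_i$ have volume $1/n!$ and reduce directly to unimodular simplices; the coset argument is exactly what copes with simplices of arbitrary index $g$. For the optimality I would exhibit, for each prescribed $N\in\N$, the simplex $P=\conv\{0,Ne_1,e_2,\dots,e_n\}$, for which $n!\,\vol(P)=N$, together with the translate $t=\tfrac12 e_1$: a direct check shows that the only lattice points of $t+P$ are $e_1,2e_1,\dots,Ne_1$, whence $\LE(t+P)=N=n!\,\vol(P)$, realizing equality for every prescribed number of lattice points.
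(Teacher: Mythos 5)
Your proof is correct, and after the common first step it takes a genuinely different route from the paper's. Both arguments start the same way: triangulate $P$ into lattice simplices, pass to a single simplex $S=\conv\{0,a_1,\dots,a_n\}$ with one vertex moved to the origin, and let $g=n!\,\vol(S)=|\det(a_1,\dots,a_n)|$ be its index. From there the paper argues via the half-open parallelepiped $C=\{\sum_i\rho_i a_i : 0\le\rho_i<1\}$: it uses the standard facts $\LE(C)=g$ and $\LE(\bar t+C)=\LE(C)$ for every translate (proved by a fractional-part bijection), together with the inclusion $t+S\subset(t+C)\cup\{t+a_1,\dots,t+a_n\}$, the translated vertices being non-lattice points since $t\notin\Z^n$. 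You instead transport everything by $A^{-1}$ (where $Ae_i=a_i$) to the standard simplex $\Sigma$, decompose the image lattice $M=A^{-1}\Z^n\supseteq\Z^n$ into its $g$ cosets modulo $\Z^n$, and prove from scratch that a non-lattice translate $\Z^n-u$ meets $\Sigma$ in at most one point, namely the componentwise ceiling $\lceil u\rceil$; the hypothesis $t\notin\Z^n$ enters exactly to guarantee that every coset representative $u_j$ is non-integral. That pinning-down argument is sound: $z_i\ge\lceil u_i\rceil$ for all $i$, integrality of $\sum_i z_i$, and the strict inequality $\sum_i u_i<\sum_i\lceil u_i\rceil$ (valid because some $u_i\notin\Z$) force $z=\lceil u\rceil$. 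As to what each route buys: the paper's is shorter, since the parallelepiped handles all $g$ cosets in one stroke, and its inclusion immediately re-derives Blichfeldt's bound with the additive $n$ when $t=0$ (as the paper remarks); your version keeps all counting inside the simplex itself and makes transparent where the $+n$ disappears --- only the coset $\Z^n$ itself can contribute more than one point to $\Sigma$ (the $n+1$ points $0,e_1,\dots,e_n$), and a non-lattice translation destroys precisely that coset's excess, so your argument also recovers $\LE(S)\le g+n$ in the lattice-translate case. Your optimality example ($\conv\{0,Ne_1,e_2,\dots,e_n\}$ shifted by $\tfrac12 e_1$) is the same as the paper's.
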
 
In other words, if we have a non-lattice translate of $P$ then we can slightly 
improve Blichfeldt's bound \eqref{eq:blichfeldt} by $n$. This does not
mean, however, that $t+P$ has less lattice points than $P$. For
instance, for $n>2$ and $m\in\N$ let $T_m$ be the so called Reeve simplex 
$T_m=\conv\{0,e_1,\dots,e_{n-1},m\,v\}$, where $v=e_1+e_2+\cdots+e_n$.  Then the vertices are the only lattice points in $T_m$, but $(1/2)\,v+T_m$ contains the $m$ lattice points $v,2\,v,\dots,m\,v$. In the $2$-dimensional case the situation is different and for a detailed discussion of
lattice points in translates of lattice polygons  we refer to
\cite{Hadwiger1976}. 

Since \eqref{eq:blichfeldt} and the inequality in Lemma \ref{lem:main}
depend only on the volume, it is easy  to generalize them to an
arbitrary lattice $\Lambda\subset\R^n$ with determinant
$\det\Lambda>0$. Then, with the setting as before,  we have 
\begin{equation}
\mathrm{i)}\,\,  \#(K\cap\Lambda)\leq n!\frac{\vol(K)}{\det\Lambda}+n,
\text{ and } \mathrm{ii)}\,\, \#((t+P)\cap\Lambda)\leq n!\frac{\vol(P)}{\det\Lambda}.
\label{eq:blichfeldt_translate} 
\end{equation} 
In the case of Theorem \ref{thm:main} we conjecture
that the right statement for general lattices is 
\begin{conjecture}Let $\Lambda\subset\R^n$ be a lattice and let $K\in\Kn$ with $\dim(K\cap\Lambda)=n$. Then 
\begin{equation*}
  \#(K\cap\Lambda)< \frac{\vol(K)}{\det\Lambda}+(n-1)!\,\frac{\F(K)}{\det\Lambda_{n-1}},
\end{equation*} 
where $\det\Lambda_{n-1}$ is the minimal determinant of an
$(n-1)$-dimensional sublattice of  $\Lambda$.
\end{conjecture}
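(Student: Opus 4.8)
The plan is to adapt the interior/boundary splitting behind Theorem~\ref{thm:main} to an arbitrary lattice, replacing the standard fundamental cell by one adapted to the densest hyperplane direction of $\Lambda$. Since passing from $K$ to $P=\conv(K\cap\Lambda)$ leaves $\#(K\cap\Lambda)$ unchanged while decreasing neither strict inequality nor either of $\vol(K)$, $\F(K)$ (surface area is monotone under inclusion of convex bodies), it suffices to treat the case where $K=P$ is a full-dimensional $\Lambda$-polytope. Next, let $\Lambda'=\Lambda\cap H$ be an $(n-1)$-dimensional primitive sublattice of minimal determinant $\det\Lambda_{n-1}$, where $H=\lin\Lambda'$ has unit normal $u$, and let $\pi$ denote orthogonal projection onto $H$. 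A short computation with a basis $b_1,\dots,b_{n-1}$ of $\Lambda'$ extended by $b_n$ shows that the consecutive lattice hyperplanes parallel to $H$ are spaced at distance $\delta=\det\Lambda/\det\Lambda_{n-1}$ and that $\pi(\Lambda)$ is a lattice in $H$ with $\det\pi(\Lambda)=\det\Lambda/\delta=\det\Lambda_{n-1}$.

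I would then count by fibres. For $y\in\pi(\Lambda)$ let $c_y$ be the number of points of $\Lambda$ on the line $y+\R u$ lying in $K$; if this fibre is a segment of length $\ell_y$, then $c_y\le \ell_y/\delta+1$, the extra $1$ occurring only over columns meeting $K$. Summing over $y$ gives
\[
  \#(K\cap\Lambda)\ \le\ \frac{1}{\delta}\sum_{y}\ell_y\ +\ \#\bigl(\pi(K)\cap\pi(\Lambda)\bigr).
\]
The projection $\pi(K)$ is a $\pi(\Lambda)$-lattice polytope, so by Blichfeldt~\eqref{eq:blichfeldt_translate} in $H$ together with Cauchy's formula $\vol_{n-1}(\pi(K))\le\tfrac12\F(K)$ the second term is at most $\tfrac{(n-1)!}{2\det\Lambda_{n-1}}\F(K)+(n-1)$; the additive $(n-1)$ is lower order against the strict inequality and the lower bound on $\F(K)$ forced by full-dimensionality, and it is here that Lemma~\ref{lem:main} is meant to enter, trading the additive boundary terms in the lower-dimensional counts for nothing, exactly as in the proof of Theorem~\ref{thm:main}. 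This contributes one ``half'' of the surface-area term, morally the shadow of $K$ on $H$.

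The remaining and decisive step is the volume term $\tfrac1\delta\sum_y\ell_y$. Because $\delta\,\det\Lambda_{n-1}=\det\Lambda$, one has $\tfrac1\delta\int_{H}\ell(y)\,dy=\vol(K)/\det\Lambda$, so $\tfrac1\delta\sum_y\ell_y$ equals $\vol(K)/\det\Lambda$ plus the Riemann discrepancy between the lattice sum of the concave fibre-length function $\ell$ over $\pi(\Lambda)$ and its integral. The heart of the matter is to bound this discrepancy by the complementary half $\tfrac{(n-1)!}{2\det\Lambda_{n-1}}\F(K)$ of the surface-area budget, since $\ell$ falls to $0$ along $\bd\pi(K)$ precisely where the tapering boundary sheet of $K$ feeds into $\F(K)$. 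I expect this discrepancy estimate to be the main obstacle, and the reason the statement is only conjectural: the crude version of it is what forces the proven Theorem~\ref{thm:main} to accept the factor $\tfrac{\sqrt n+1}{2}$, reflecting the width of the fundamental cell, whereas reaching the constant $1$ demands an extremal analysis matched exactly by the simplices $S_k$. For a general lattice one must, in addition, verify that the oblique geometry governing the fibre spacing $\delta$ does not degrade the constant below the intrinsic value $(n-1)!/\det\Lambda_{n-1}$ dictated by the densest sublattice $\Lambda'$.
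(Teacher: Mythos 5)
You should first be clear about the status of this statement: it is stated in the paper as an open \emph{conjecture}, not a theorem, and the paper offers no proof of it. What the paper does prove is the weaker Theorem~\ref{thm:general} (hence Corollary~\ref{cor:general}), in which the constant $1$ in front of $(n-1)!\,\F(K)/\det\Lambda_{n-1}$ is replaced by $\mu(\Lambda)\lambda_1(\Lambda^\star)+1$, of order $n$ by Banaszczyk's transference bound. Your proposal is candid that the decisive ``Riemann discrepancy'' estimate is unproven and is the reason the statement is conjectural; that self-assessment is correct, so what you have written is a programme rather than a proof, and in that respect it is consistent with the paper. The paper's proven fallback takes a different route from your fibre sum: it bounds the deep lattice points by a Dirichlet--Voronoi cell packing argument ($\#L_1\leq\vol(P)/\det\Lambda$, with constant exactly $1$ on the volume, which your scheme hopes to extract from $\tfrac1\delta\sum_y\ell_y$), and then confines all Blichfeldt-type slicing to thin prisms $Q_i$ over the facets, sliced into lattice hyperplane layers $H_i(j)$ handled by Lemma~\ref{lem:main} via \eqref{eq:blichfeldt_translate}~ii); the factor $\mu(\Lambda)\lambda_1(\Lambda^\star)+1$ arises from $\lceil\beta_i\rceil/(\enorm{a_i}\det\Lambda)$ and \eqref{eq:polar_lattice}.

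Beyond the acknowledged gap, the unconditional part of your sketch contains a genuine error. The claim that $\pi(\Lambda)$ is a lattice in $H$ with $\det\pi(\Lambda)=\det\Lambda_{n-1}$ is false for general $\Lambda$: the orthogonal projection $\pi$ has kernel $\R u$, and unless $\R u\cap\Lambda\neq\{0\}$ (true for $\Z^n$ with $u=e_n$, but false for a generic lattice even when $H$ carries the densest sublattice) the map $\pi|_\Lambda$ is injective, so $\pi(\Lambda)$ is a rank-$n$, hence dense, subgroup of $H$; each line $y+\R u$ then contains at most one lattice point, the term $\#(\pi(K)\cap\pi(\Lambda))$ is meaningless, and your column decomposition collapses. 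Even when $\R u$ does contain a primitive $w\in\Lambda$, one gets $\det\pi(\Lambda)=\det\Lambda/\enorm{w}$, which can be strictly smaller than $\det\Lambda_{n-1}$, and $\pi(K)$ need not be a lattice polytope with respect to $\pi(\Lambda)$, so the Blichfeldt-plus-Cauchy step needs repair as well. The sound general-lattice slicing is by the lattice hyperplanes $H+j\delta u$ themselves, each of which meets $\Lambda$ in a translate of $\Lambda'$ --- but note that applying Blichfeldt slice by slice across all of $K$ would produce $(n-1)!\,\vol(K)/\det\Lambda$ as the volume term, which is why the paper localizes the slicing to the boundary prisms; any completion of your approach would have to confront the same obstruction before even reaching the discrepancy estimate you identify as the heart of the matter.
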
 
In the $2$-dimensional case the correctness of the inequality is again an easy consequence of Pick's identity.
It is also not hard to verify such an inequality with an additional factor
of order $n$ in front of the surface area, and we will give an outline of a proof of
this result in the last section (see Corollary \ref{cor:general}). 
For a corresponding conjecture
regarding the lower bound \eqref{eq:bhw_lattice} we refer to \cite{ Schnell1992, Schnell1991}.  In Section 1 we will prove  Lemma \ref{lem:main} and Theorem \ref{thm:main}. The proof of Theorem \ref{thm:three_case} is based on results on the inner/outer parallel body of a convex body and is given in the second section.

\section{Proof of Lemma \ref{lem:main} and Theorem \ref{thm:main}}

The proof of Lemma  \ref{lem:main} will be an immediate consequence of the fact that for $n$ linearly independent lattice points $a_1,\dots,a_n\in\Z^n$ and the associated half-open 
parallelepiped $C=\{\sum_{i=1}^n \rho_i\,a_i : 0\leq \rho_i<1 \}$ one has 
\begin{equation}
 \LE(C)=|\det(a_1,\dots,a_n)|.
\label{eq:parallelepiped}
\end{equation}
Observe, both sides just describe the index of the sublattice generated by $a_1,\dots,a_n$ with respect to $\Z^n$ (see e.g.~\cite[p.~22]{Gruber1987}).

\begin{proof}[Proof of Lemma \ref{lem:main}] Let $P\subset\R^n$ be a lattice polytope and let $t\in\R^n\setminus\Z^n$. Let $S_1,\dots,S_m\subseteq P$ be $n$-dimensional lattice simplices such that $P=\cup_{i=1}^m S_i$ and $\dim(S_i\cap S_j)\leq n-1$ for $i\ne j$. For instance, we can take any lattice triangulation of $P$. Then $\vol(P)=\sum_{i=1}^m \vol(S_i)$ and 
\begin{equation*} 
   \LE(t+P)\leq \sum_{i=1}^m \LE(t+S_i).
\end{equation*}   
Hence it suffices to prove the statement for an $n$-dimensional lattice simplex $S$, say. Without loss of generality let $0,a_1,\dots,a_n$ be the vertices of $S$, $a_i\in\Z^n$, and let $C$  be the half-open parallelepiped generated by $a_1,\dots,a_n$. Then by \eqref{eq:parallelepiped} we have 
\begin{equation} 
\LE(C) = |\det(a_1,a_2,\dots,a_n)|=n!\,\vol(S).  
\label{eq:index}
\end{equation}
Next we observe that for any vector $\bar{t}\in\R^n$ 
\begin{equation}
\label{eq:translate}
   \LE(\bar{t}+C)=\LE(C).
\end{equation}
This is a well-known fact, but for sake of completeness we give a short argument: Let $\bar{t}=\sum_{i=1}^n \tau_i\,a_i$. For $\bar{b}=\bar{t}+\sum_{i=1}^n \rho_i\,a_i \in (\bar{t}+C)\cap\Z^n$ the vector 
$f(\bar{b})$ defined by $f(\bar{b})=\sum_{i=1}^n (\tau_i+\rho_i -\lfloor\tau_i+\rho_i\rfloor)\,a_i$ is contained in $C\cap\Z^n$. Here $\lfloor x\rfloor$ denotes the largest integer not bigger then $x$. It is easy to see that $f$ is a bijection between $(\bar{t}+C)\cap\Z^n$ and $C\cap\Z^n$, and hence we have verified \eqref{eq:translate}.

Finally, since 
\begin{equation}
 t+S \subset (t+C)\cup\{t+a_1,t+a_2,\ldots,t+a_n\},
\label{eq:inclusion}
\end{equation} 
we  get   $\LE(t+S) \leq \LE(t+C)$ for $t\in\R^n\setminus\Z^n$. Together with 
\eqref{eq:translate} and \eqref{eq:index} we obtain the desired inequality for $S$, 
and thus for the lattice polytope $P$. 

In order to show that it is best possible let $S_k$ be the simplex defined in the introduction. 
Then $k=n!\,\vol(S_k)$ and if we translate $S_k$ by $\frac{1}{2}e_1$, for instance, then $e_1,\dots,k\,e_1$ are the only lattice points in $\frac{1}{2}e_1+S_k$.
\end{proof} 

\begin{remark} If $t=0$ then \eqref{eq:inclusion} gives $\LE(S)\leq \LE(C)+n$ because $a_1,\dots,a_n$ are the only points in $S$ not contained in $C$. Thus we get by the same argument  Blichfeldt's  inequality $\LE(K)\leq n!\,\vol(K)+n$.  
\end{remark} 

For the proof of Theorem \ref{thm:main} we also need some facts about lattice polytopes. Let $P\subset\R^n$ be a lattice polytope. Then we can describe it as  
\begin{equation}
    P=\{x\in\R^n : a_i\cdot x \leq b_i,\,1\leq i\leq m\},
\label{eq:lattice_polytope} 
\end{equation}  
for some $a_i\in\Z^n$, $b_i\in\Z$. Here $x\cdot y$ denotes the inner product, and by $\enorm{\cdot}$ we denote the  associated  Euclidean norm. Without loss of generality let $F_i=P\cap\{x\in\R^n : a_i\cdot x =b_i\}$ be the facets of $P$, $1\leq i\leq m$. We may also assume that the vectors $a_i$ are primitive vectors, i.e., $\conv\{0,a_i\}\cap\Z^n=\{0,a_i\}$. In this case we have (cf.~e.g.~\cite[Proposition 1.2.9]{Martinet2003}) 
\begin{equation}
  \det(\aff F_i \cap\Z^n)=\enorm{a_i},
\label{eq:determinant}
\end{equation} 
where $\det(\aff F_i \cap\Z^n)$ is the determinant of the $(n-1)$-dimensional sublattice of $\Z^n$ contained in the affine hull of $F_i$.

\begin{proof}[Proof of Theorem \ref{thm:main}]  Let $K\in\Kn$ with $\dim(K\cap\Z^n)=n$. 
By the monotonicity of $\vol(\cdot)$ and $F(\cdot)$ it suffices to prove the conjecture for the $n$-dimensional lattice polytope $P=\conv\{K\cap\Z^n\}$. Let $C_n$ be the cube of edge length $1$ centered at the origin. Let $L_1=\{z\in P\cap\Z^n : z+C_n\subset P\}$ and 
$L_2=(P\cap \Z^n)\setminus L_1$. Obviously,  we have 
\begin{equation}
  \#L_1 \leq \vol(P), 
\label{eq:volumepart} 
\end{equation}   
and it remains to bound the size of the set $L_2$. To this end let $P$ be given as in \eqref{eq:lattice_polytope} with facets $F_1,\dots,F_m$. 
For each lattice point $z\in L_2$ there exists a facet $F_i$ such that $z+C_n$ intersects $F_i$, i.e., there exists an $x\in C_n$ with 
$a_i\cdot z+a_i\cdot x> b_i$. Hence we have 
\begin{equation*}
  a_i\cdot z> b_i-a_i\cdot x \geq b_i-\frac{1}{2}|a_i|,
\end{equation*} 
where $|\cdot|$ denotes the $l_1$-norm. Since the left hand side is an integer we obtain 
\begin{equation}
  a_i\cdot z\geq  b_i-\gamma_i\text{ with }  \gamma_i=\left\lceil \frac{1}{2}|a_i|\right\rceil-1.
\label{eq:lower_violated}
\end{equation}  
Thus 
\begin{equation}
  L_2\subset \bigcup_{i=1}^m (Q_i\cap\Z^n),
\label{eq:set_l2}
\end{equation} 
where $Q_i=\conv\{F_i,F_i -(\gamma_i/\enorm{a_i}^2)  a_i\}$ is the prism with basis $F_i$ and height $\gamma_i/\enorm{a_i}$ in the direction $-a_i$. Next we claim that for $1\leq i\leq m$ 
\begin{equation}
  \LE(Q_i) < \frac{\sqrt{n}+1}{2}\,(n-1)!\vol_{n-1}(F_i)+(n-1),
\label{eq:claim}   
\end{equation}
where $\vol_{n-1}(\cdot)$ denotes the $(n-1)$-dimensional volume.
Each lattice point in such a prism $Q_i$ is contained in one of the layers  
\begin{equation*}
  H_i(j)=Q_i\cap\{x\in \R^n : a_i\cdot x = b_i-j\},\quad j=0,1,\dots,\gamma_i. 
\end{equation*}  
Of course, $H_i(0)=F_i$ is an $(n-1)$-dimensional lattice polytope with respect to the lattice $\Lambda_{F_i}=\aff(F_i)\cap\Z^n$. On account of \eqref{eq:determinant}  we get from Blichfeldt's inequality (see \eqref{eq:blichfeldt_translate} i))    
\begin{equation}   
  \LE(H_i(0))=\#(F_i\cap\Lambda_{F_i}) \leq (n-1)!\frac{\vol_{n-1}(F_i)}{\enorm{a_i}}+(n-1).
\label{eq:first_layer}
\end{equation} 
Now let $v_i\in\Z^n$ be any lattice vector in the lattice hyperplane $\{x\in\R^n : a_i\cdot x=b_i-1\}$ and let $w_i$ be a lattice vector in $F_i$. Since 
\begin{equation*}
  H_i(j)=H_i(0)-j\,\frac{1}{\enorm{a_i}^2}\,a_i 
\end{equation*}  
we have 
\begin{equation*}
 H_i(j)\cap\Z^n = \left(H_i(0)-j\,\frac{1}{\enorm{a_i}^2}\,a_i + j(w_i-v_i)\right)\cap\Z^n=(j\,t_i+F_i)\cap\Z^n,
\end{equation*} 
with $t_i=w_i-v_i-1/\enorm{a_i}^2\,a_i\in\{x\in\R^n : a_i\cdot x=0\}$. Since $a_i$ is primitive and $j\leq \gamma_i<\enorm{a_i}^2$ we find  that $j\,t_i\in\R^n\setminus\Z^n$ for $1\leq j\leq \gamma_i$. Thus we may apply in these cases Lemma \ref{lem:main}, or more precisely \eqref{eq:blichfeldt_translate} ii), and obtain  
\begin{equation*}
 \LE(H_i(j)) \leq (n-1)!\frac{\vol_{n-1}(F_i)}{\enorm{a_i}}, \quad j=1,\dots, \gamma_i.
\end{equation*}  
Together with \eqref{eq:first_layer} we get 
\begin{equation*}
\begin{split}
\LE(Q_i) & = \LE(H_i(0)) + \sum_{j=1}^{\gamma_i} \LE(H_i(j)) 
 \leq  \frac{\lceil\frac{1}{2}|a_i|\rceil}{\enorm{a_i}} (n-1)!\,\vol_{n-1}(F_i) + (n-1)\\  
 &<   \frac{\sqrt{n}+1}{2} (n-1)!\,\vol_{n-1}(F_i) + (n-1), 
\end{split} 
\end{equation*} 
 and so we have verified \eqref{eq:claim}. 

Finally, in order to prove the inequality of the theorem  we have to consider the lattice points which we count more than once in the right hand side of \eqref{eq:set_l2}, and we claim 
\begin{equation}
   \#L_2\leq \sum_{i=1}^m \LE(Q_i)-m\,(n-1).
\label{eq:claim_l2}
\end{equation} 
To this end we consider the vertices $v_1,\dots,v_k$ of $P$. Let $g_{n-1}(v_j)$ be the number of facets containing $v_j$ and let $f_0(F_i)$ be the number of vertices of the facet $F_i$. Obviously, we have 
\begin{equation}
   \sum_{i=1}^m f_0(F_i) = \sum_{j=1}^k g_{n-1}(v_j). 
\label{eq:vertices_facets}  
\end{equation} 
Since each facet has at least $n$ vertices and each vertex is contained in at least $n$ facets we conclude from \eqref{eq:vertices_facets} that 
$\sum_{i=1}^m f_0(F_i) \geq \max\{m,k\}\,n \geq k + m\,(n-1)$. This shows 
\eqref{eq:claim_l2} and so, in view of \eqref{eq:claim} we get 
\begin{equation*}
  \#L_2< \frac{\sqrt{n}+1}{2}\,(n-1)!\,\F(P).
\end{equation*}  
Together with  \eqref{eq:volumepart} we obtain the desired inequality.
\end{proof}

\section{Proof of Theorem \ref{thm:three_case}} 

For the proof of Theorem \ref{thm:three_case} we need a bit of the theory of 
intrinsic volumes for which we refer to \cite{Schneider1993}. Let $K\in\Kn$ and let $B_n$ be the $n$-dimensional unit ball of volume $\kappa_n$.  The outer parallel body $K+\rho\,B_n$ of $K$ at distance $\rho$ is the Minkowski sum of $K$ and $\rho\,B_n$, i.e., $K+\rho\,B_n=\{x+y : x\in K,\,y\in\rho\,B_n\}$. Its volume can be described by the so called Steiner polynomial 
\begin{equation}
  \vol(K+\rho\,B_n) = \sum_{i=0}^n \V_i(K)\,\kappa_{n-i}\,\rho^{n-i}, 
\label{eq:steiner_polynomial}
\end{equation} 
where $V_i(K)$ is called the $i$-th intrinsic volume of $K$.  
In particular, we have $\V_n(K)=\vol(K)$, $\V_{n-1}(K)=(1/2)\F(K)$, and $\V_0(K)=1$. 
It was conjectured by Wills that  
\begin{equation}
  \LE(K) \leq \sum_{i=0}^n \V_i(K), 
\label{eq:wills}
\end{equation} 
but, in general, this inequality does not hold (see \cite{Betke1993a, Hadwiger1979}). In dimension three, however, it is true \cite{Overhagen1975} and so we have 
\begin{equation}
\LE(K) \leq \V_3(K)+ \V_2(K) + V_1(K) + 1.
\label{eq:overhagen}
\end{equation}  
The inner parallel body of $K$ at distance $\rho$ is given by the set 
\begin{equation*}
  K\ominus\rho\,B_n=\left\{x\in K : x+\rho\,B_n\subseteq K\right\}.
\end{equation*} 
If $K\ominus\rho\,B_n$ is non-empty then we trivially have 
$(K\ominus\rho\,B_n)+\rho\,B_n\subseteq K$.

\begin{proof}[Proof of Theorem \ref{thm:three_case}] Again it suffices to prove the inequality for the $3$-dimensional lattice polytope $P=\conv\{K\cap\Z^3\}$.  According to \eqref{eq:overhagen} and \eqref{eq:steiner_polynomial} we obtain 
\begin{equation*}
\begin{split}
  &\LE(P) \leq  \V_3(P)+\V_2(P) + \V_1(P)+1 \\
   &< \V_3(P) + \V_1(P)\kappa_1\frac{1}{\sqrt{\pi}} 
              +  \V_2(P)\kappa_2\left(\frac{1}{\sqrt{\pi}}\right)^2+\kappa_3\left(\frac{1}{\sqrt{\pi}}\right)^3+\left(1-\kappa_3\left(\frac{1}{\sqrt{\pi}}\right)^3\right)\\ 
&= \vol\left(P+\pi^{-1/2}\,B_3\right)+\left(1-\frac{4}{3\sqrt{\pi}}\right).
\end{split} 
\end{equation*}
Hence, if $P\ominus \pi^{-1/2}\,B_3\ne\emptyset$ we get 
\begin{equation*}
\LE(P\ominus \pi^{-1/2}\,B_3)< \vol((P\ominus \pi^{-1/2}\,B_3)+\pi^{-1/2}\,B_3)+0.25 \leq \vol(P)+0.25.
\label{eq:inner_lattice}
\end{equation*} 
On the other hand it was shown in  \cite[Korollar 1]{McMullen1973a} that  
\begin{equation*}
 \LE(P)-\LE(P\ominus 3^{-1/2}\,B_3) \leq \F(P) + 2. 
\end{equation*}  
Combining the last two inequalities yields  
\begin{equation*}
\begin{split}
 \LE(P) & \leq \LE(P\ominus 3^{-1/2}\,B_3)+\F(P)+2 \leq \LE(P\ominus \pi^{-1/2}\,B_3)+\F(P)+2 \\ & < \vol(P)+\F(P)+2.25.
\end{split} 
\end{equation*} 
Since the surface area of a $3$-dimensional lattice polytope is not less  than the surface area of the simplex $S_1$, which is equal to $(3+\sqrt{3})/2>2.25$, we finally obtain 
\begin{equation*}
 \LE(P)< \vol(P)+2\,\F(P).
\end{equation*}   
\end{proof} 

In the context with the conjectured inequality \eqref{eq:wills} it was shown by Bokowski \cite{Bokowski1975}  that for $n\leq 5$  
\begin{equation*}
  \LE(K)\leq \vol(K+\kappa_n^{-1/n}\,B_n). 
\end{equation*} 
With $\rho_n=\kappa_n^{-1/n}$ this leads, as in the proof above, to 
$\LE(P\ominus \rho_nB_n)\leq \vol(P)$ where $P=\conv\{K\cap\Z^n\}$. In order to estimate the remaining lattice points $\LE(P)-\LE(P\ominus \rho_n\,B_n)$, which are close to the boundary of $P$, we can proceed as in the proof of Theorem \ref{thm:main} where we bound the size of the set $L_2$. This leads, roughly speaking, for $n\leq 5$ to an inequality of the form 
\begin{equation*}
  \LE(K) < \vol(K) + \left(\rho_n+\frac{1}{2}\right)\,(n-1)!\,\F(K),  
\end{equation*}  
which is stronger than the one of Theorem \ref{thm:main}. Since the improvement, however, is marginal we omit a detailed proof.

\section{The inequality for arbitrary lattices} 
In order to present an inequality as in Theorem \ref{thm:main} for arbitrary lattices we need some basic facts and notions from Geometry of Numbers for which we refer to \cite{Gruber1987}. For a lattice $\Lambda \subset\R^n$ let $\Lambda^\star=\{y\in\R^n : y\cdot b \in\Z \text{ for all } b\in\Lambda\}$ be its polar lattice. The length (norm) of a shortest non-zero lattice vector in a lattice $\Lambda$ is denoted by $\lambda_1(\Lambda)$, and an $(n-1)$-dimensional sublattice of $\Lambda$ with minimal determinant is denoted by $\Lambda_{n-1}$. Then we have (cf.~e.g.~\cite[Proposition 1.2.9]{Martinet2003})
\begin{equation}
 \det\Lambda\cdot\lambda_1(\Lambda^\star) =\det\Lambda_{n-1}.
\label{eq:polar_lattice}
\end{equation}  
 Moreover, we need the so called Dirichlet-Voronoi cell 
$\mathrm{DV}(\Lambda)$ of a lattice $\Lambda$ consisting of all points whose nearest lattice point in $\Lambda$ is the origin, i.e., 
\begin{equation*} 
\mathrm{DV}(\Lambda)=\{x\in\R^n : \enorm{x}\leq\enorm{x-b} \text{ for all } b\in\Lambda\}.
\end{equation*} 
Then $\vol(\mathrm{DV}(\Lambda))=\det\Lambda$ and the smallest radius of  a ball containing $\mathrm{DV}(\Lambda)$ is called the inhomogeneous minimum of $\Lambda$ and will be denoted by $\mu(\Lambda)$. So in the case of the integral lattice $\Z^n$ the Dirichlet-Voronoi cell is just the cube of edge length 1 centered at the origin and $\mu(\Z^n)=\sqrt{n}/2$, $\lambda_1(\Z^n)=1$, $\det\Z^n_{n-1}=1$  and $(\Z^n)^\star=\Z^n$.

With these notations we can generalize Theorem \ref{thm:main} as follows. 
\begin{theorem} Let $\Lambda\subset\R^n$ be a lattice and  
let $K\in\Kn$ with $\dim(K\cap\Lambda)=n$.
 Then 
 \begin{equation*}
    \LE(K)\leq \frac{\vol(K)}{\det\Lambda}+
    \left(\mu(\Lambda)\lambda_1(\Lambda^\star)+1\right)\,(n-1)!\,
  \frac{\F(K)}{\det\Lambda_{n-1}}.
  \end{equation*} 
\label{thm:general}
\end{theorem}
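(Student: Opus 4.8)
The plan is to follow the proof of Theorem~\ref{thm:main} almost verbatim, replacing the unit cube $C_n$ (which is the Dirichlet--Voronoi cell of $\Z^n$) by $\mathrm{DV}(\Lambda)$, and replacing the data $\enorm{a_i}$, $1$, $1$ attached to $\Z^n$ by their $\Lambda$-analogues $\det\Lambda\,\enorm{a_i^\star}$, $\det\Lambda$, $\det\Lambda_{n-1}$. As before, by monotonicity of $\vol$ and $\F$ it suffices to treat the $\Lambda$-lattice polytope $P=\conv\{K\cap\Lambda\}$. I would split $P\cap\Lambda$ into $L_1=\{z\in P\cap\Lambda : z+\mathrm{DV}(\Lambda)\subseteq P\}$ and $L_2=(P\cap\Lambda)\setminus L_1$. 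Since the translates $z+\mathrm{DV}(\Lambda)$, $z\in\Lambda$, tile $\R^n$ with disjoint interiors and $\vol(\mathrm{DV}(\Lambda))=\det\Lambda$, the cells attached to $L_1$ lie inside $P$, whence $\#L_1\le\vol(P)/\det\Lambda$; this produces the volume term, and all that remains is the estimate of $\#L_2$.

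For the boundary layer I would write $P=\{x : a_i^\star\cdot x\le b_i,\ 1\le i\le m\}$ with facets $F_i$, where now $a_i^\star\in\Lambda^\star$ is the \emph{primitive polar} facet normal and $b_i=a_i^\star\cdot w_i\in\Z$ for any $w_i\in F_i\cap\Lambda$. The role of \eqref{eq:determinant} is played by $\det(\aff F_i\cap\Lambda)=\det\Lambda\,\enorm{a_i^\star}$ (again \cite[Prop.~1.2.9]{Martinet2003}). If $z\in L_2$, then $z+\mathrm{DV}(\Lambda)$ pokes across some facet hyperplane, so $a_i^\star\cdot z> b_i-h_i$ with $h_i=\max_{x\in\mathrm{DV}(\Lambda)}a_i^\star\cdot x\le\mu(\Lambda)\enorm{a_i^\star}$ (Cauchy--Schwarz, using $\mathrm{DV}(\Lambda)\subseteq\mu(\Lambda)B_n$); as $a_i^\star\cdot z\in\Z$ this gives $a_i^\star\cdot z\ge b_i-\gamma_i$ with $\gamma_i=\lceil h_i\rceil-1$. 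Hence $L_2\subseteq\bigcup_i(Q_i\cap\Lambda)$ for prisms $Q_i$ over $F_i$ of lattice-height $\gamma_i$, which I slice into the $\gamma_i+1$ layers $H_i(j)$, $0\le j\le\gamma_i$. The base $H_i(0)=F_i$ is an $(n-1)$-dimensional $\Lambda_{F_i}$-polytope, so \eqref{eq:blichfeldt_translate}~i) gives $\LE(H_i(0))\le(n-1)!\,\vol_{n-1}(F_i)/\det(\aff F_i\cap\Lambda)+(n-1)$, whereas each $H_i(j)$, $j\ge1$, is a translate $j\,t_i+F_i$ of the base which I must show is a \emph{non-lattice} translate, so that Lemma~\ref{lem:main} (i.e.\ \eqref{eq:blichfeldt_translate}~ii)) applies without the additive $(n-1)$. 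Summing the layers, bounding $\gamma_i+1=\lceil h_i\rceil\le\mu(\Lambda)\enorm{a_i^\star}+1$, inserting $\det(\aff F_i\cap\Lambda)=\det\Lambda\,\enorm{a_i^\star}$, and using $\enorm{a_i^\star}\ge\lambda_1(\Lambda^\star)$ together with \eqref{eq:polar_lattice} collapses the prefactor to $(\mu(\Lambda)\lambda_1(\Lambda^\star)+1)/\det\Lambda_{n-1}$, so that $\LE(Q_i)\le(\mu(\Lambda)\lambda_1(\Lambda^\star)+1)(n-1)!\,\vol_{n-1}(F_i)/\det\Lambda_{n-1}+(n-1)$. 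The purely combinatorial vertex--facet incidence count from the proof of Theorem~\ref{thm:main} (which never used $\Lambda=\Z^n$) removes the $m(n-1)$ over-count, turns $\sum_i\vol_{n-1}(F_i)$ into $\F(P)$, and, added to the $L_1$ bound, yields the theorem.

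The main obstacle is the non-lattice translate step, the analogue of the inequality $\gamma_i<\enorm{a_i}^2$ used for $\Z^n$. Writing the shift as $t_i=(w_i-v_i)-\enorm{a_i^\star}^{-2}a_i^\star$ with $w_i,v_i\in\Lambda$ on consecutive layers, one checks $j\,t_i\in\Lambda$ iff $\tfrac{j}{\enorm{a_i^\star}^2}a_i^\star\in\Lambda$. If the ray $\R_{>0}a_i^\star$ meets $\Lambda$ only in $0$ this never happens for $j\ge1$; otherwise let $g_i$ be the shortest lattice vector on that ray and $N_i=a_i^\star\cdot g_i\in\Z_{\ge1}$, so that $\tfrac{j}{\enorm{a_i^\star}^2}a_i^\star=\tfrac{j}{N_i}g_i\in\Lambda$ exactly when $N_i\mid j$. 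The crux is therefore to prove $\gamma_i<N_i$, and here I would invoke the defining property of the Dirichlet--Voronoi cell: for $x\in\mathrm{DV}(\Lambda)$ one has $\enorm{x}\le\enorm{x-g_i}$, which rearranges to $a_i^\star\cdot x\le\tfrac12\,a_i^\star\cdot g_i=\tfrac12 N_i$, hence $h_i\le\tfrac12 N_i$ and $\gamma_i=\lceil h_i\rceil-1<N_i$. (For $\Z^n$ this is precisely $h_i=\tfrac12|a_i|_1\le\tfrac12\enorm{a_i}^2=\tfrac12 N_i$.) Once this Voronoi extent estimate is in place the remaining steps are the mechanical substitutions above; the only genuinely new ingredients beyond the proof of Theorem~\ref{thm:main} are this estimate and the identification of the facet data through $\Lambda^\star$ and \eqref{eq:polar_lattice}.
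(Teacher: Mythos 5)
Your proposal is correct and follows essentially the same route as the paper: replace $C_n$ by $\mathrm{DV}(\Lambda)$, split into $L_1$ and $L_2$, bound the prisms $Q_i$ layer by layer via Blichfeldt and Lemma~\ref{lem:main}, and collapse the prefactor using $\enorm{a_i}\geq\lambda_1(\Lambda^\star)$ together with \eqref{eq:polar_lattice}. In fact, your explicit verification that $j\,t_i\notin\Lambda$ for $1\leq j\leq\gamma_i$ (via the primitive vector $g_i$ on the ray $\R_{>0}a_i^\star$, the integer $N_i$, and the Voronoi half-space estimate $h_i\leq\tfrac12 N_i$) supplies precisely the detail the paper dismisses in one sentence as following ``from the definition of $\gamma_i$ and the definition of the Dirichlet-Voronoi cell.''
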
 
 
Observe, in the case $\Lambda=\Z^n$ we get essentially the inequality of Theorem \ref{thm:main}. By fundamental results of Banaszczyk \cite{Banaszczyk1993a} it is known that 
\begin{equation*}
 \mu(\Lambda)\lambda_1(\Lambda^\star) \leq c\,n,
\end{equation*} 
for some universal constant $c$ and so we have 
\begin{corollary}  Let $\Lambda\subset\R^n$ be a lattice and  
let $K\in\Kn$ with $\dim(K\cap\Lambda)=n$.
 Then 
 \begin{equation*}
    \LE(K)< \frac{\vol(K)}{\det\Lambda}+
    \left(c\,n\right)\,(n-1)!\,
  \frac{\F(K)}{\det\Lambda_{n-1}}
  \end{equation*} 
for some universal constant $c$.
\label{cor:general}
\end{corollary}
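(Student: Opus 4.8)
The plan is to run the proof of Theorem \ref{thm:main} essentially verbatim, replacing $\Z^n$ by $\Lambda$ and the centred unit cube $C_n$ by the Dirichlet--Voronoi cell $\mathrm{DV}(\Lambda)$. By the monotonicity of $\vol(\cdot)$ and $\F(\cdot)$ it again suffices to treat the lattice polytope $P=\conv\{K\cap\Lambda\}$, which I write as in \eqref{eq:lattice_polytope} with facets $F_1,\dots,F_m$ and outer normals $a_i$ now chosen to be \emph{primitive vectors of the polar lattice} $\Lambda^\star$. Then $a_i\cdot z\in\Z$ for every $z\in\Lambda$, and the facet sublattice $\Lambda_{F_i}=\aff(F_i)\cap\Lambda$ satisfies $\det\Lambda_{F_i}=\det\Lambda\cdot\enorm{a_i}$, the exact analogue of \eqref{eq:determinant} (again \cite[Proposition 1.2.9]{Martinet2003}). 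Splitting $P\cap\Lambda$ into $L_1=\{z\in P\cap\Lambda : z+\mathrm{DV}(\Lambda)\subset P\}$ and $L_2=(P\cap\Lambda)\setminus L_1$ and using that the cells $z+\mathrm{DV}(\Lambda)$, $z\in\Lambda$, tile $\R^n$ with $\vol(\mathrm{DV}(\Lambda))=\det\Lambda$, I obtain at once $\#L_1\leq\vol(P)/\det\Lambda$, which accounts for the volume term.

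For $L_2$ I proceed as in \eqref{eq:lower_violated}--\eqref{eq:claim}. If $z\in L_2$, then $z+\mathrm{DV}(\Lambda)$ meets some facet $F_i$, so there is an $x\in\mathrm{DV}(\Lambda)$ with $a_i\cdot z+a_i\cdot x>b_i$. Since $\mathrm{DV}(\Lambda)\subseteq\mu(\Lambda)\,B_n$ by definition of the inhomogeneous minimum, Cauchy--Schwarz gives $a_i\cdot x\leq\enorm{a_i}\,\mu(\Lambda)$, whence $a_i\cdot z>b_i-\enorm{a_i}\,\mu(\Lambda)$; as $a_i\cdot z$ is an integer this yields $a_i\cdot z\geq b_i-\gamma_i$ with $\gamma_i=\lceil\enorm{a_i}\,\mu(\Lambda)\rceil-1$. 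Thus $L_2$ is covered by the lattice points of the prisms $Q_i=\conv\{F_i,\,F_i-(\gamma_i/\enorm{a_i}^2)a_i\}$, each sliced into the $\gamma_i+1$ layers $H_i(j)$, $j=0,\dots,\gamma_i$, exactly as before.

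To bound $\LE(Q_i)$ I treat the layers separately. The top layer $H_i(0)=F_i$ is an $(n-1)$-dimensional lattice polytope with respect to $\Lambda_{F_i}$, so \eqref{eq:blichfeldt_translate}\,i) in dimension $n-1$ gives $\LE(H_i(0))\leq(n-1)!\,\vol_{n-1}(F_i)/(\det\Lambda\,\enorm{a_i})+(n-1)$, while each lower layer $H_i(j)$, $1\leq j\leq\gamma_i$, is a non-lattice translate of $F_i$ inside $\Lambda_{F_i}$ and hence, by Lemma \ref{lem:main} resp.\ \eqref{eq:blichfeldt_translate}\,ii), contributes at most $(n-1)!\,\vol_{n-1}(F_i)/(\det\Lambda\,\enorm{a_i})$. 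Summing over the $\gamma_i+1=\lceil\enorm{a_i}\,\mu(\Lambda)\rceil\leq\enorm{a_i}\,\mu(\Lambda)+1$ layers, using $\enorm{a_i}\geq\lambda_1(\Lambda^\star)$ together with the identity $\det\Lambda_{n-1}=\det\Lambda\cdot\lambda_1(\Lambda^\star)$ from \eqref{eq:polar_lattice}, I obtain
\[
\LE(Q_i)\leq\bigl(\mu(\Lambda)\lambda_1(\Lambda^\star)+1\bigr)\,(n-1)!\,\frac{\vol_{n-1}(F_i)}{\det\Lambda_{n-1}}+(n-1),
\]
the analogue of \eqref{eq:claim}. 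Finally, the same vertex--facet double count as in \eqref{eq:vertices_facets}--\eqref{eq:claim_l2} removes the $m\,(n-1)$ overcounted points, so $\#L_2\leq\bigl(\mu(\Lambda)\lambda_1(\Lambda^\star)+1\bigr)(n-1)!\,\F(P)/\det\Lambda_{n-1}$; adding the bound on $\#L_1$ gives the theorem.

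I expect the only genuine obstacles to be the two lattice-determinant facts that in the $\Z^n$ case were supplied for free by \eqref{eq:determinant} and by the explicit primitivity computation. First, the claim $\det\Lambda_{F_i}=\det\Lambda\cdot\enorm{a_i}$ for primitive $a_i\in\Lambda^\star$, which I would deduce from the proposition underlying \eqref{eq:polar_lattice} applied to the one-dimensional polar sublattice spanned by $a_i$. Second, and harder, the verification that each slice with $1\leq j\leq\gamma_i$ is really a \emph{non-lattice} translate of $F_i$: writing $\LE(H_i(j))=\LE\bigl((j\,t_i+F_i)\cap\Lambda\bigr)$ with $t_i=s_i-\enorm{a_i}^{-2}a_i$, $s_i\in\Lambda$ and $a_i\cdot t_i=0$, one must check that $j\,t_i\notin\Lambda_{F_i}$, i.e.\ that $\enorm{a_i}^{-2}j\,a_i\notin\Lambda$, throughout the range $1\leq j\leq\gamma_i$. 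In the integral case this rested on $\gamma_i<\enorm{a_i}^2$ and the primitivity of $a_i$; carrying the corresponding divisibility argument through for a general pair $(\Lambda,\Lambda^\star)$, with the determinant factors tracked correctly, is the step that will demand the most care.
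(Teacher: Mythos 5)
Your overall architecture is the paper's own (the corollary is proved there via exactly the intermediate bound with factor $\mu(\Lambda)\lambda_1(\Lambda^\star)+1$ that you derive, namely Theorem \ref{thm:general}), but your execution breaks at precisely the step you flag at the end as ``demanding the most care'': with your definition $\gamma_i=\lceil\enorm{a_i}\,\mu(\Lambda)\rceil-1$, the claim that every lower layer $H_i(j)$, $1\leq j\leq\gamma_i$, is a \emph{non-lattice} translate of $F_i$ is false, not merely unverified. Take $\Lambda=\Z\times N\Z\subset\R^2$ with $N$ large, $P=[0,k]\times[0,N]$, and the facet $F=\{k\}\times[0,N]$ with primitive normal $a=e_1\in\Lambda^\star$. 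Here $\mu(\Lambda)=\sqrt{1+N^2}/2$, so your $\gamma\approx N/2$, while $a/\enorm{a}^2=e_1\in\Lambda$; hence every layer $H(j)=\{k-j\}\times[0,N]$ is a \emph{lattice} translate of $F$ (your $t_i$ can be taken to be $0$), and it contains $2$ points of $\Lambda$, not the $(n-1)!\,\vol_{n-1}(F)/(\enorm{a}\det\Lambda)=N/N=1$ that your per-layer application of \eqref{eq:blichfeldt_translate}\,ii) asserts. The cure is the paper's choice of $\gamma_i$: set $\beta_i=\max\{a_i\cdot x : x\in\mathrm{DV}(\Lambda)\}$, the exact support function of the Dirichlet--Voronoi cell rather than its Cauchy--Schwarz majorant $\enorm{a_i}\mu(\Lambda)$, and put $\gamma_i=\lceil\beta_i\rceil-1$. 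The covering of $L_2$ by the prisms goes through verbatim, and the translate property becomes automatic: if $j\,a_i/\enorm{a_i}^2\in\Lambda\setminus\{0\}$ for some $j\geq 1$, then by the very definition of $\mathrm{DV}(\Lambda)$ we have $\mathrm{DV}(\Lambda)\subseteq\{x : a_i\cdot x\leq j/2\}$, so $\beta_i\leq j/2$ and $\gamma_i\leq\lceil j/2\rceil-1<j$; thus no index $j\leq\gamma_i$ can produce a lattice translate (in the example above $\beta=1/2$, so $\gamma=0$ and there are no lower layers at all). Cauchy--Schwarz is then applied only at the end, via $\lceil\beta_i\rceil\leq\mu(\Lambda)\enorm{a_i}+1$, which recovers exactly the factor $\mu(\Lambda)\lambda_1(\Lambda^\star)+1$ you compute.

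There is a second, independent gap: even granting the repaired prism estimate, what you have proved is the inequality with factor $\mu(\Lambda)\lambda_1(\Lambda^\star)+1$, i.e.\ Theorem \ref{thm:general}, whereas the statement to be proved has the factor $c\,n$. The passage from one to the other is not formal: it requires Banaszczyk's transference theorem $\mu(\Lambda)\lambda_1(\Lambda^\star)\leq c\,n$ \cite{Banaszczyk1993a}, a deep external result that your write-up never invokes. Nothing in your argument bounds $\mu(\Lambda)\lambda_1(\Lambda^\star)$ by any function of $n$ alone (for ill-conditioned lattices each factor separately can be arbitrarily large), so as written the proposal does not yield the corollary even after the first gap is closed.
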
   
  
Since the proof of Theorem \ref{thm:general} is just a simple adaption of the proof of Theorem \ref{thm:main} to this more general situation, we only give a sketch of it. 
\begin{proof}[Proof of Theorem \ref{thm:general}] We keep the notation of the proof of Theorem \ref{thm:main} and again we consider $P=\conv\{K\cap\Lambda\}$, which is a lattice polytope with respect to $\Lambda$. The outer normal vectors $a_i$ of the facets $F_i$ are now  lattice vectors of $\Lambda^\star$ and for the determinant of $\aff F_i\cap\Lambda$ (cf.~\eqref{eq:determinant}) we obtain (cf.~e.g.~\cite[Proposition 1.2.9]{Martinet2003})
\begin{equation}
 \det(\aff F_i\cap\Lambda) =\enorm{a_i}\det\Lambda.
\label{eq:facet_general}
\end{equation} 
The role of the cube $C_n$ is replaced by $\mathrm{DV}(\Lambda)$ and so we have 
\begin{equation*}
  \#L_1 \leq \frac{\vol(P)}{\det\Lambda}. 
\end{equation*}  
Now for $1\leq i\leq m$ let 
\begin{equation*} 
  \beta_i=\max\{a_i\cdot x : x\in \mathrm{DV}(\Lambda)\}.
\end{equation*} 
Then for each $z\in L_2$ there exists a facet $F_i$ with (cf.~\eqref{eq:lower_violated})
\begin{equation*}
  a_i\cdot z \geq b_i-\gamma_i \text{ with } \gamma_i=\lceil\beta_i\rceil-1. 
\end{equation*} 
Following the proof of Theorem \ref{thm:main} we obtain on account of \eqref{eq:facet_general}
\begin{equation*}
 \#(Q_i\cap\Lambda) \leq \frac{\lceil\beta_i\rceil}{\enorm{a_i}\det\Lambda}(n-1)!\,\vol_{n-1}(F_i)+(n-1).
\label{eq:qi_general}
\end{equation*} 
We remark that in the case  of an arbitrary lattice  one has to be a bit more careful when applying \eqref{eq:blichfeldt_translate} ii) to the single layers $H_i(j)$, since one has to ensure that the vector $j\,t_i$ is not a lattice vector of $\Lambda$ for $1\leq j\leq \gamma_i$. This follows, however, from  the definition of $\gamma_i$ and the definition of the Dirichlet-Voronoi cell. 
By the definitions of $\beta_i$ and of the inhomogeneous minimum, and by \eqref{eq:polar_lattice} we have 
\begin{equation*}
\begin{split} 
 \frac{\lceil\beta_i\rceil}{\enorm{a_i}\det\Lambda} & \leq \frac{\mu(\Lambda)\,\enorm{a_i}+1}{\enorm{a_i}\det\Lambda} = \frac{\mu(\Lambda)}{\det\Lambda} + \frac{1}{\enorm{a_i}\det\Lambda}\\
 & = \frac{\mu(\Lambda)\lambda_1(\Lambda^\star)}{\det\Lambda_{n-1}} + \frac{\lambda_1(\Lambda^\star)}{\enorm{a_i}}\frac{1}{\det\Lambda_{n-1}} \leq \frac{\mu(\Lambda)\lambda_1(\Lambda^\star)+1}{\det\Lambda_{n-1}}.
\end{split} 
\end{equation*} 
Finally, as in the proof of Theorem \ref{thm:main} we conclude 
\begin{equation*}
\begin{split} 
\#(P\cap\Lambda) &\leq \#L_1 + \#L_2 -m\,(n-1) \\ & \leq \frac{\vol(P)}{\det\Lambda}+(\mu(\Lambda)\lambda_1(\Lambda^\star)+1) (n-1)!\,\frac{\F(P)}{\det\Lambda_{n-1}}.
\end{split}   
\end{equation*} 
\end{proof}


\noindent {\it Acknowledgements.}  The authors wish to thank Mar\'\i a A. Hern\'andez Cifre for valuable comments.

\def\cprime{$'$}
\providecommand{\bysame}{\leavevmode\hbox to3em{\hrulefill}\thinspace}
\providecommand{\MR}{\relax\ifhmode\unskip\space\fi MR }
\providecommand{\MRhref}[2]{%
  \href{http://www.ams.org/mathscinet-getitem?mr=#1}{#2}
}
\providecommand{\href}[2]{#2}

\end{document}